\theoremstyle{plain} 
\newtheorem{thm}{Theorem}[section]
\newtheorem{rmk}[thm]{Remark}
\newtheorem{prop}[thm]{Proposition}
\newtheorem{cor}[thm]{Corollary}
\newtheorem{lem}[thm]{Lemma}
\numberwithin{equation}{section}
\newcommand{\Ann}{\mbox{Ann}\,}
\newcommand{\Hom}{\mbox{Hom}\,}
\newcommand{\Ext}{\mbox{Ext}\,}
\newcommand{\gr}{\mbox{grade}\,}
\newcommand{\depth}{\mbox{depth}\,}
\renewcommand{\dim}{\mbox{dim}\,}
\newcommand{\id}{\mbox{inj.dim}\,}
\newcommand{\pd}{\mbox{proj.dim}\,}
\newcommand{\h}{\mbox{ht}\,}
\newcommand{\E}{\mbox{E}}
\newcommand{\fm}{\mathfrak{m}}
\newcommand{\bQ}{\mathbb{Q}}
\begin{document}
	\title{A STUDY OF STRONGLY COHEN-MACAULAY IDEALS BY DELTA INVARIANT}
	
	\author[M. T. Dibaei]{Mohammad T. Dibaei$^1$} 
	\author[Y. Khalatpour]{Yaser Khalatpour$^2$}
	
	\address{$^{1, 2}$ Faculty of Mathematical Sciences and Computer,
		Kharazmi University, Tehran, Iran.}
	
	\email{dibaeimt@ipm.ir} \email{yaserkhalatpour@gmail.com}

	\keywords{strongly Cohen-Macualay ideals, $\delta$-invariant, Gorenstein rings, regular rings.}
	\subjclass[2010]{13C14, 13D02, 13H10, 16G50, 16E65, 13H05}

	\maketitle
	\begin{abstract} 
		Let $R$ be a Cohen-Macaulay local ring, $I$ a strongly Cohen-Macaulay ideal of $R$. We show that $R/\Ann_R(I)$ is a maximal Cohen-Macaulay $R$-module by means of the delta-invariant. Also it is shown that there exists  a Cohen-Macaulay ideal $J$ of $R$ such that  the $\delta_{R/J}$-invariant of  all Koszul homologies of $R$ with respect $I$  is zero. 	
	\end{abstract}

	\section{Introduction}
	 Throughout the paper, $(R, \fm, k)$ is assumed to be a Cohen-Macaulay local ring with maximal ideal $\fm$ and the residue field $k$. The delta invariant of a finite (i.e. finitely generated) $R$-module has been defined by Maurice Auslander  \cite{AB}.  For a finite module $M$ over a local ring $R$, the  $\delta$-invariant of $M$ is defined as $\delta_{R}\left( M\right)=\mu_R(M/M^\text{cm})$, where $M ^\text{cm}$ is the sum of all submodules $\phi(L) \subseteq M $, $L $ ranges over all maximal Cohen-Macaulay $R$-modules with no non-zero free direct summands, $\phi $ ranges over all $R$--linear homomorphisms from $L$ to $M $ and $\mu_R(-)$ stands for the minimum number of generators. It follows that  $\delta_{R}(M)\leq\ \mu_R(M) $. The theory of  $\delta$-invariant  is extensively studied for Cohen-Macaulay modules (see \cite{Y}, \cite{AM}. \cite{AB}, \cite{LW}). There are some papers studying conditions for an $R$ module $M$ to have the
	 property $\delta_R (M) = 0$ ( see \cite[Theorem 2.7]{D1}, \cite[Theorem 2.2]{Y1} ). This paper is devoted to provide such conditions (see Theorem \ref{3}, Proposition \ref{6}, Theorem\ref{15} ).
	  
	 An ideal $I$ is called strongly Cohen-Macaulay if $H_i(I, R)$ is Cohen--Macaulay for all $i$, where $H_i(I, R)$'s are the homology modules  of the Koszul complex of $R$ with respect to ideal  $I$. Note that a strongly Cohen-Macaulay ideal $I$ is  Cohen-Macaulay that is $R/I$ is Cohen-Macaulay as $R$-module.
	 
	 	The notion of linkage of ideals in commutative algebra is invented by Peskine and Szpiro \cite{PS}. Two ideals $I$ and $J$ in a Cohen-Macaulay local ring $R$ are said to be linked if there is a regular sequence $\underline{a}$ in their intersection such that $I = (\underline{a}) :_R J$ and $J = (\underline{a}) :_R I$. They have shown that the Cohen-Macaulay-ness property
	 is preserved under linkage over Gorenstein local rings and provided a counter example to show that  the above result is no longer true if the base ring is Cohen-Macaulay but not Gorenstein. In the following, we investigate the situation over a Cohen-Macaulay local ring with canonical module and generalize the result of Peskine and Szpiro \cite{PS}. Huneke in 1985, defined the concept of Strongly Cohen-Macaulay  and shows that when $R$ is Cohen-Macaulay and $I$ is Strongly Cohen-Macaulay then $R/J$ is Cohen-Macaulay. The theory then developed by introducing Sliding Depth conditions and the depth condition of the power of ideals and also extended from linkage theory to residual intersections and some other topics. 
	 
	 In  \cite[Proposition 1.1]{H}, Huneke has shown that $R/\Ann_R(I)$ is Cohen-Macaulay if $I$ is a strongly Cohen-Macaulay  ideal $R$. In section 2, we show that $R/\Ann_R(I)$ is indeed a maximal Cohen-Macaulay  $R$-module under the same assumption on $I$ without using of \cite[Proposition 1.1]{H}. Actually we give an another proof for \cite[Proposition 1.1]{H} by means of delta invariant (see Theorem \ref{70}).

	 In section 3,  we first show that if $I$ is a strongly Cohen-Macaulay ideal of $R$, then there exists an ideal $J$ of $R$ such that $ J\subseteq I $, $ R/J $ is Cohen-Macaulay and $\delta_{R/J}(H_i(I,R))=0$
	 for all  $ i> 0$ (see Proposition \ref{6}).
	 
	 Next, assume that $\omega_R$ is a canonical ideal of $R$ and  that $I$ is  Cohen-Macaulay ideal (not necessarily strongly Cohen-Macaulay) and show that  $\delta_{R/J}(H_s(I,\omega_R))=0$ for some Cohen-Macaulay ideal $J$, where $ J\subseteq I $ and $ s=\sup\{i:\  H_i(I,\omega_R)\neq0\} $ (see Theorem \ref{15}).

	\section{ 	Maximal Cohen-Macaulay-ness of annihilator of a strongly Cohen-Macaulay ideal}

	Assume that $R$ is a Cohen-Macaulay local ring and that $M$ is a finite $R$-module. A Cohen-Macaulay approximation of a finite $R$-module $M$ is a short exact sequence 
	\begin{equation}\label{e1}
	0\longrightarrow Y \longrightarrow X \overset{\varphi}{\longrightarrow} M \longrightarrow 0 
	\end{equation} such that $X$ is a maximal Cohen-Macaulay $R$-module and $Y$ is a finite $R$-module with finite injective dimension. We say that the sequence (\ref{e1}) is minimal if each endomorphism $ \psi $ of $ X $ with $ \varphi\circ \psi=\varphi $ is an automorphism of $ X $. If $R$ possesses a canonical module $ \omega_R$ then  a minimal Cohen-Macaulay approximation of $ M $ exists and is unique up to isomorphism (see \cite[Theorem 11.16]{LW},  \cite[Corollary 2.4]{HS}. It is well known that if (\ref{e1}) is a minimal Cohen-Macaulay approximation of $ M $, then  $ \delta_R(M) $ determines  the maximum rank of all free direct summands of $ X $, see \cite[Exercise 11.47]{LW} and \cite[Proposition 1.3]{D1}. As mentioned in \cite[Proposition 11.27]{LW}.  Also it can be shown that if $R$ is Cohen-Macaulay ring which admits a canonical module, then $\delta_R(M) $ is less than or equal to $ n $, where  there is an epimorphism $X\oplus R^{n}\longrightarrow M$ with $X $ a maximal Cohen-Macaulay module with no free direct summands (see \cite[Proposition 11.25]{LW} and  \cite[Proposition 4.8]{AA}). This definition of delta is used by Ding \cite{D}.
	
	We recall the basic properties of the delta invariant.
	\begin{prop}\label{a} 
	\cite[Corollary 11.28]{LW} and  \cite[Lemma 1.2]{ABIM}. Let $R$ be a Cohen-Macaulay local ring with canonical module. For finite $R$-modules $M $ and $N $ the following statements hold true.	
	\begin{enumerate} [\rm(i)]
	\item  $\delta_{R}(M\oplus N)=\delta_{R}\left( M\right)+\delta_{R}\left( N\right)  $.
	\item If  $M\longrightarrow N $ is a surjective homomorphism then $\delta_{R}\left( M\right)\geq \delta_{R}\left( N\right)$.  
	\item If $R$ is Gorenstein, then $\delta_{R}\left( k\right)=1$  if and only if $R$ is regular.
	\item  If $R$ is Gorenstein, then $\delta_{R}(M)=\mu_R(M) $ when  $\emph\pd_{R}(M) $ is finite.	
	\end{enumerate}
	\end{prop}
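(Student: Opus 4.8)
The plan is to argue everything directly from the definition $\delta_R(M)=\mu_R(M/M^{\text{cm}})$, using only two elementary facts about the minimal number of generators over the local ring $R$: that $\mu_R$ is additive on finite direct sums, and that $\mu_R$ cannot increase along a surjection (by Nakayama, since a surjection $A\to B$ induces a surjection of $k$-vector spaces $A/\fm A\to B/\fm B$). For (i), I would first show that $(M\oplus N)^{\text{cm}}=M^{\text{cm}}\oplus N^{\text{cm}}$. The inclusion $\supseteq$ is immediate by composing maps $L\to M$ and $L\to N$ with the two canonical injections into $M\oplus N$; for $\subseteq$, any $\phi\colon L\to M\oplus N$ splits as $(\pi_M\phi,\pi_N\phi)$, so $\operatorname{im}\phi\subseteq\operatorname{im}(\pi_M\phi)\oplus\operatorname{im}(\pi_N\phi)\subseteq M^{\text{cm}}\oplus N^{\text{cm}}$. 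Passing to quotients gives $(M\oplus N)/(M\oplus N)^{\text{cm}}\cong M/M^{\text{cm}}\oplus N/N^{\text{cm}}$, and additivity of $\mu_R$ yields (i). For (ii), the construction $M\mapsto M^{\text{cm}}$ is functorial in the sense that a surjection $f\colon M\to N$ carries $M^{\text{cm}}$ into $N^{\text{cm}}$, because $f\circ\phi\colon L\to N$ again has image in $N^{\text{cm}}$ for each admissible $\phi$. Hence $f$ induces a surjection $M/M^{\text{cm}}\to N/N^{\text{cm}}$, and since $\mu_R$ does not increase under surjection we get $\delta_R(M)\ge\delta_R(N)$.

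For (iii) I would reduce to a statement about free modules. Since $\mu_R(k)=1$ and $\delta_R\le\mu_R$, we have $\delta_R(k)\in\{0,1\}$, and because $k$ is a simple module, $k^{\text{cm}}$ is either $0$ or $k$; thus $\delta_R(k)=0$ exactly when some nonzero map $\phi\colon L\to k$ exists with $L$ maximal Cohen-Macaulay and having no free direct summand. As $\operatorname{Hom}_R(L,k)\cong\operatorname{Hom}_k(L/\fm L,k)\neq0$ for every $L\neq0$, such a $\phi$ exists if and only if there is \emph{any} nonzero maximal Cohen-Macaulay module without free direct summands. So $\delta_R(k)=1$ is equivalent to every maximal Cohen-Macaulay module being free. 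Finally I would invoke the standard dichotomy: if $R$ is regular then Auslander--Buchsbaum forces every maximal Cohen-Macaulay module to be free, whereas if $R$ is not regular then $k$ has infinite projective dimension, so with $d=\dim R$ the $d$-th syzygy $\Omega^d k$ is a nonzero maximal Cohen-Macaulay module which, after splitting off free summands, is non-free and has no free direct summand. This gives (iii).

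Part (iv) is where the real work lies, and I expect the Gorenstein hypothesis to enter decisively. Writing $\delta_R(M)=\mu_R(M/M^{\text{cm}})$, equality $\delta_R(M)=\mu_R(M)$ holds if and only if $M^{\text{cm}}\subseteq\fm M$, so it suffices to show that every $\phi\colon L\to M$ with $L$ maximal Cohen-Macaulay and without free direct summands has $\operatorname{im}\phi\subseteq\fm M$. The key step is that over a Gorenstein ring a maximal Cohen-Macaulay module $L$ is totally reflexive, whence $\operatorname{Ext}^{i}_R(L,R)=0$ for all $i>0$; dimension-shifting this vanishing along a finite free resolution of $M$ (using that $M$ has finite projective dimension) yields $\operatorname{Ext}^{i}_R(L,M)=0$ for $i>0$, so every $\phi\colon L\to M$ lifts through a free cover of $M$ and therefore factors as $L\xrightarrow{a}F\xrightarrow{b}M$ with $F$ free. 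The second, more elementary, ingredient is that since $L$ has no free direct summand, every map $L\to R$ is non-surjective (a surjection onto $R$ would split and produce a free summand), so its image lies in $\fm$; applying this coordinatewise gives $a(L)\subseteq\fm F$, and hence $\operatorname{im}\phi=b(a(L))\subseteq\fm\,b(F)\subseteq\fm M$. Thus $M^{\text{cm}}\subseteq\fm M$ and $\delta_R(M)=\mu_R(M)$.

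The main obstacle is precisely the $\operatorname{Ext}$-vanishing and the resulting factorization through a free module in part (iv): this is what converts the finiteness of the projective dimension into the concrete statement that non-free maximal Cohen-Macaulay modules map into $\fm M$, and it is the only place where more than the bare definition of $\delta_R$ and Nakayama's lemma is required. Parts (i) and (ii) could alternatively be deduced from the existence and uniqueness of the minimal Cohen-Macaulay approximation recalled above, by identifying $\delta_R$ with the free rank of the maximal Cohen-Macaulay part and checking that this rank is additive on direct sums and monotone under surjections; but the direct computation with $M^{\text{cm}}$ seems shorter and self-contained.
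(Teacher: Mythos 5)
Your proposal is correct, but it takes a genuinely different route from the paper, because the paper does not prove this proposition at all: it quotes it from \cite[Corollary 11.28]{LW} and \cite[Lemma 1.2]{ABIM}, where these facts are obtained from the machinery of (minimal) maximal Cohen-Macaulay approximations --- in particular from the identification of $\delta_R(M)$ with the rank of the largest free direct summand of the maximal Cohen-Macaulay module in a minimal approximation, which is where the canonical-module hypothesis enters. You instead argue directly from Auslander's definition $\delta_R(M)=\mu_R(M/M^{\mathrm{cm}})$, and each step checks out: the decomposition $(M\oplus N)^{\mathrm{cm}}=M^{\mathrm{cm}}\oplus N^{\mathrm{cm}}$ gives (i); functoriality of $(-)^{\mathrm{cm}}$ together with Nakayama gives (ii); for (iii) the reduction of $\delta_R(k)=1$ to the statement that every maximal Cohen-Macaulay module is free, settled by Auslander--Buchsbaum in the regular case and by splitting free summands off $\Omega^d k$ in the non-regular case, is sound; and for (iv) the reduction to $M^{\mathrm{cm}}\subseteq\fm M$, proved via $\Ext_R^{i}(L,R)=0$ for maximal Cohen-Macaulay $L$ over a Gorenstein ring plus the observation that a module with no free summand maps into $\fm F$ under any map to a free module $F$, is exactly right. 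What your approach buys: it is self-contained and avoids approximation theory entirely, and it actually proves more than is claimed --- (i) and (ii) hold over any local ring, and (iii) over any Cohen-Macaulay local ring, with no canonical-module or Gorenstein hypothesis. What it costs is that it does not by itself connect $\delta_R$ to Cohen-Macaulay approximations, which is the form in which the paper uses the invariant later (e.g.\ in the proof of Theorem \ref{33}, where $\delta_R(R/\Ann_R(I))=0$ is converted into stability of the approximating module $X$); a reader relying only on your proof would still need \cite[Exercise 11.47]{LW} or \cite[Proposition 1.3]{D1} for that translation. One small expository slip in (iv): vanishing of $\Ext_R^{i}(L,M)$ for $i>0$ is not literally what lets $\phi$ lift through a free cover $F\twoheadrightarrow M$; the lift requires $\Ext_R^{1}(L,\Omega M)=0$, where $\Omega M$ is the kernel of $F\twoheadrightarrow M$ --- but since $\Omega M$ again has finite projective dimension, your dimension-shifting argument supplies exactly this vanishing, so the gap is purely one of wording.
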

	Denote the $i$th homology module of the Koszul complex of $R$ with respect to a generating set of an ideal $I$ of $R$ by $H_i(I, R)$. In our discussions we use the following well known result.

	\begin{prop}\label{999} \cite[Proposition 1.6.11]{BH} Let $I$ be an ideal of $R$ and let $0\longrightarrow K\longrightarrow M\longrightarrow N\longrightarrow 0$ be an exact sequence of $R$-modules. Then there exists the long exact sequence 
			\begin{equation}
		\label{e2}
		\cdots\longrightarrow  H_i(I,K) \longrightarrow  H_i(I,M) \longrightarrow  H_i(I,N)\longrightarrow H_{i-1}(I,K) \longrightarrow H_{i-1}(I,M)\longrightarrow\cdots.
		\end{equation}
	\end{prop}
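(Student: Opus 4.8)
The plan is to realize the Koszul homology functor $H_i(I,-)$ as the homology of a fixed complex of flat modules, so that the desired long exact sequence becomes an instance of the standard long exact homology sequence attached to a short exact sequence of chain complexes. First I would fix a generating set $\underline{x}=x_1,\ldots,x_n$ of $I$ and recall that, by definition, $H_i(I,M)=H_i\bigl(K_\bullet(\underline{x})\otimes_R M\bigr)$, where $K_\bullet(\underline{x})$ denotes the Koszul complex of $R$ on $\underline{x}$. The key structural observation is that each term $K_i(\underline{x})\cong R^{\binom{n}{i}}$ is finite free, hence flat over $R$.

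Next I would tensor the given short exact sequence $0\lra K\lra M\lra N\lra 0$ with the Koszul complex degreewise. Since every $K_i(\underline{x})$ is flat, the functor $K_i(\underline{x})\otimes_R-$ is exact, so in each homological degree we obtain a short exact sequence of modules. Because the Koszul differentials are compatible with the induced tensor maps (that is, the construction is natural in the module argument), assembling these degreewise sequences yields a short exact sequence of chain complexes
\[
0\lra K_\bullet(\underline{x})\otimes_R K\lra K_\bullet(\underline{x})\otimes_R M\lra K_\bullet(\underline{x})\otimes_R N\lra 0.
\]

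Finally I would invoke the standard long exact sequence in homology (the snake/zigzag lemma) associated with this short exact sequence of complexes. The homologies of the three complexes are precisely $H_i(I,K)$, $H_i(I,M)$, and $H_i(I,N)$, while the connecting homomorphism furnishes the maps $H_i(I,N)\lra H_{i-1}(I,K)$, producing exactly the sequence \eqref{e2}. There is no substantial obstacle in this argument; the only points requiring care are the flatness of the Koszul terms, which is what makes the degreewise tensored sequences exact, and the naturality of the Koszul construction, which guarantees that the resulting maps are genuine chain maps and hence that the homology long exact sequence applies.
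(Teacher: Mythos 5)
Your proposal is correct and coincides with the standard proof of the result the paper cites without proof (Bruns--Herzog, Proposition 1.6.11): since each Koszul term $K_i(\underline{x})\cong R^{\binom{n}{i}}$ is free, tensoring the given short exact sequence with $K_\bullet(\underline{x})$ yields a short exact sequence of complexes, and the long exact homology sequence does the rest. As the paper simply defers to \cite{BH} here, there is no divergence to report.
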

	
			An ideal $I$ is called {\it strongly Cohen-Macaulay} if $H_i(I, R)$ is Cohen--Macaulay for all $i$ (see \cite[Definition 3.1]{C0}). 
	Note that strongly Cohen-Macaulay-ness of an ideal $I$ is independent of its generators (see \cite[7.2.9]{FO}) for references).   
 
   We start this section by proving that $R/\Ann_R(I)$ is maximal Cohen-Macaulay  $R$-module by means of delta invariant. We prove this result by taking several steps. First we prove it in the case $R$ admits a canonical module.
 \begin{thm}\label{3}
 	Suppose that $ R $ is a Cohen-Macaulay local ring with a canonical module and that $ I (\neq0)$ is  a strongly Cohen-Macaulay ideal of $ R $.	If $ \emph\Ann_R(I)\neq0 $, then $ \delta_R(R/\emph\Ann_R(I))=0 $.	
 \end{thm}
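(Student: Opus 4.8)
The plan is to reduce the $\delta$-computation to the maximal Cohen--Macaulay property of $R/\Ann_R(I)$, which in turn is extracted from the Koszul complex. Fix generators $I=(a_1,\dots,a_n)$ and let $K_\bullet$ be the Koszul complex on $a_1,\dots,a_n$, with differentials $\partial_i$, cycles $Z_i=\ker\partial_i$ and boundaries $B_i=\im\partial_{i+1}$. The top differential $\partial_n\colon K_n=R\to K_{n-1}$ is (up to sign) multiplication by the $a_j$, so $H_n(I,R)=\ker\partial_n=(0:_R I)=\Ann_R(I)$. Thus the hypothesis $\Ann_R(I)\neq0$ is exactly $\gr(I)=0$, and since $R$ is Cohen--Macaulay (hence equidimensional and catenary) this gives $\dim R/I=\dim R=:d$.

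First I would record that every nonzero $H_i:=H_i(I,R)$ is maximal Cohen--Macaulay. By the strongly Cohen--Macaulay hypothesis each $H_i$ is Cohen--Macaulay, and since $I\cdot H_i=0$ we have $\Supp H_i\subseteq\V(I)$, so $\dim H_i\le d$; for $H_0=R/I$ equality is the computation above. The point that needs the theory of strongly Cohen--Macaulay ideals is that the \emph{intermediate} homologies are also full-dimensional, i.e. unmixed of dimension $d$; granting this, each nonzero $H_i$ is Cohen--Macaulay of dimension $d$, hence maximal Cohen--Macaulay.

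The heart of the proof is then a depth chase up $K_\bullet$ showing that $R/\Ann_R(I)$ is maximal Cohen--Macaulay. The crucial identification is $\im\partial_n\cong K_n/\ker\partial_n=R/\Ann_R(I)$, so it suffices to prove that all boundary modules $B_i$ are maximal Cohen--Macaulay and to read off the case $i=n-1$. I would induct on $i$, with base case $B_0=\im\partial_1=I$, which is maximal Cohen--Macaulay because in $0\to I\to R\to R/I\to0$ both $R$ and $R/I=H_0$ are. For the step, the sequence $0\to Z_i\to K_i\to B_{i-1}\to0$ with $K_i$ free and $B_{i-1}$ maximal Cohen--Macaulay forces (depth lemma) $Z_i$ to be maximal Cohen--Macaulay, and then $0\to B_i\to Z_i\to H_i\to0$ with $H_i$ maximal Cohen--Macaulay forces $B_i$ to be maximal Cohen--Macaulay. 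At $i=n-1$ this yields that $R/\Ann_R(I)\cong B_{n-1}$ is maximal Cohen--Macaulay.

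Finally the $\delta$-computation is formal. The module $R/\Ann_R(I)$ is cyclic and, because $\Ann_R(I)\neq0$, not free; a cyclic module with a nonzero free direct summand would be isomorphic to $R$. Hence $R/\Ann_R(I)$ is maximal Cohen--Macaulay with no nonzero free direct summand, so (using existence and uniqueness of minimal Cohen--Macaulay approximations, whence the canonical-module hypothesis) its minimal Cohen--Macaulay approximation is $0\to0\to R/\Ann_R(I)\to R/\Ann_R(I)\to0$; by the interpretation of $\delta$ as the maximal free rank of the maximal Cohen--Macaulay term, $\delta_R(R/\Ann_R(I))=0$. The main obstacle is precisely the maximal Cohen--Macaulay-ness of $R/\Ann_R(I)$, and inside it the dimension statement for the intermediate Koszul homologies; note that the surjection $R\twoheadrightarrow R/\Ann_R(I)$ together with Proposition \ref{a}(ii) only gives the cheap bound $\delta_R(R/\Ann_R(I))\le\delta_R(R)=1$, so the real work is ruling out $\delta=1$.
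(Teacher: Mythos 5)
Your proposal is correct, but it takes a genuinely different (in fact, opposite) route from the paper's. The paper proves $\delta_R(R/\Ann_R(I))=0$ directly, by induction on $\dim(R/I)$: it chooses $x$ regular on $R$ and on all $H_i(I,R)$ (using $\depth_R H_i(I,R)=\dim(R/I)$ from \cite[Remark 1.5]{H} --- the very input you ``grant''), proves the colon identity $(x:_R I)=xR+(0:_R I)$ so that $\frac{R/(0:_RI)}{x(R/(0:_RI))}\cong \overline R/(0:_{\overline R}\overline I)$, and closes the induction with Yoshida's comparison $\delta_R(M)\le\delta_{R/xR}(M/xM)$ from \cite[Corollary 2.5]{K}; the maximal Cohen--Macaulayness of $R/\Ann_R(I)$ is then \emph{deduced} from $\delta=0$ in Theorems \ref{33} and \ref{70} via minimal Cohen--Macaulay approximations. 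You argue in the reverse direction: you first prove $R/\Ann_R(I)$ is maximal Cohen--Macaulay by the depth chase along $0\to Z_i\to K_i\to B_{i-1}\to 0$ and $0\to B_i\to Z_i\to H_i\to 0$, ending at $B_{n-1}\cong K_n/\ker\partial_n = R/\Ann_R(I)$, and then read off $\delta=0$ formally, since a cyclic module with nonzero annihilator has no free direct summand. Your chase is sound: each step is the standard depth lemma, the case $H_i=0$ is harmless ($B_i=Z_i$), and the full-dimensionality of the nonzero intermediate homologies, which you leave as an assumption, is exactly the cited \cite[Remark 1.5]{H} (in the grade-zero case it also follows by localizing at a height-zero minimal prime of $I$, where catenarity and equidimensionality of the Cohen--Macaulay local ring $R$ plus depth-sensitivity of the Koszul complex give $\dim H_i=\dim R/I$). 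The trade-off is worth noting: your depth chase is essentially Huneke's own proof of \cite[Proposition 1.1]{H}, which this paper explicitly sets out to avoid --- its stated aim is to recover Huneke's result \emph{from} the delta invariant. So your route is shorter, needs no canonical module for the Cohen--Macaulayness part, and yields Theorem \ref{70} outright, making Lemma \ref{333} and the induction on injective dimension in Theorem \ref{33} unnecessary; but precisely for that reason it is not a new proof in the paper's sense, whereas the authors' argument stays within $\delta$-theory at the cost of the Yoshida lemma and the more delicate identity $(x:_RI)=xR+(0:_RI)$.
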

 \begin{proof}
 	Our proof is by induction on $ \dim(R/I) $. Let $ \dim(R/I)=0 $. As  $ \Ann_R(I)\neq0 $, $ \gr_R(I, R)=0 $ which implies that $ \dim R=0 $ and so we have nothing to proof.
 	
 	Assume that $ \dim(R/I)=t>0 $ and statement has been proved for smaller than $ t $. Note that $\depth_R(H_i(I, R))=\dim(R/I)$ (see \cite[Remark 1.5]{H}).  Choose an element $ x\in R $ which is a non-zero divisor on $ R $ and on all Koszul homologies $ H_i(I,R) $. Set $ \overline{R}=R/xR $ and $ \overline{I} $ is image of $ I $ under homomorphism $R\longrightarrow R/xR $.  The exact sequence
 	$ 0\longrightarrow R\overset{x}{\longrightarrow} R\longrightarrow \overline{R}\longrightarrow0 $ gives the exact sequence 
 	$0\longrightarrow  H_i(I,R) \overset{x}{\longrightarrow} H_i(I,R) \longrightarrow  H_i(\overline{I},\overline{R})\longrightarrow 0 $.
 	Therefore $ H_i(\overline{I},\overline{R})\cong H_i(I,R)/xH_i(I,R) $ for all $ i\geq0 $. In particular, $\overline{R}/\overline{I}\cong (R/I)/x(R/I)$ and all $H_i(\overline{I},\overline{R})$
 	are Cohen-Macaulay. Hence, our induction hypothesis implies that $ \delta_{\overline{R}}(\overline{R}/(0 :_{\overline{R}} \overline{I}))=0 $.

 	To be precise, we assume that $I$ is generated by $n$ elements. Concentrating on the $n$th term of the Koszul complex,  we get
 	\[\begin{array}{rl}
 	(0 :_R I)/x (0 :_R I)
 	&\cong (0 :_{\overline{R}} \overline{I}) \\
 	&= (x :_R I)/xR\\
 	\end{array}\]    
 	On the other hand, As $xR\cap (0 :_R I)= x (0 :_R I)$,  we have 
 	\[\begin{array}{rl}
 	(0 :_R I)/x (0 :_R I)
 	&=(0 :_R I)/x\cap (0 :_R I) \\
 	&\cong xR+(0 :_R I)/xR\\
 	\end{array}\] 
 	The above natural isomorphisms imply that $(x :_R I)=xR+(0 :_R I)$. Hence

 	\[\begin{array}{rl}
 	\dfrac{R/(0 :_R I)}{x (R/(0 :_R I))}
 	&\cong R/( xR+(0 :_R I)) \\
 	&= R/(x :_R I) \\
 	&\cong \overline{R}/(0 :_{\overline{R}} \overline{I}).\\
 	\end{array}\]  
 	Note that,
 	\[\begin{array}{rl}
 	\delta_R(R/(0 :_R I))
 	&\leq \delta_{\overline{R}}(\dfrac{R/(0 :_R I)}{x (R/(0 :_R I))}) \  \text{  (see \cite[ Corollary 2.5]{K})}\\
 	&= \delta_{\overline{R}}(\overline{R}/(0 :_{\overline{R}} \overline{I}))\\
 	&=0\\
 	\end{array}\] 
 \end{proof}

 \begin{rmk}\label{74}
 	Let $ R $ be a Cohen-Macaulay local ring with a canonical module. Assume that $I$ is a strongly Cohen-Macaulay ideal of $R$ and $M$ is an $R$-module such that $(0 :_M I)\neq 0$. Then $\delta_R(M/(0 :_M I))=0$ if one of the following conditions hold.
 	\begin{enumerate} [\rm(a)]
 		\item $M$ is a maximal Cohen-Macaulay $R$-module.
 		\item  $\emph\dim R=1$ and $M$ is torsion-free $R$-module. 
 	\end{enumerate} 
 \end{rmk}
 \begin{proof}
 	It is enough to prove (a). As $(0 :_M I)\neq 0$, we have $\gr(I,M)=0$ so that
 	$$\dim_R(M/IM)
 	=\dim R\\
 	=\gr(I,R)+\dim_R(R/I)$$ because $M$ is maximal Cohen-Macaulay $R$-module.  
 	As $\dim_R(M/IM)\leq \dim_R(R/I)$, we have $\gr(I,R)=0$, i.e. $(0 :_R I)\neq 0$.
 	As $M$ is a homomorphic image of a finite free module, say 		 
 	$\overset{n}{\oplus}R\longrightarrow M$, one gets a surjective homomorphism $\overset{n}{\oplus}(R/(0 :_R I)) \longrightarrow M/(0 :_M I)$ and so 
 	\[\begin{array}{rl}
 	\delta_R(M/(0 :_M I))
 	&\leq \delta_R(\overset{n}{\oplus}(R/(0 :_R I)))\\
 	&\leq\overset{n}{\sum} \delta_R(R/(0 :_R I))=0.\\
 	\end{array}\]
 	by Theorem \ref{3}.
 \end{proof}

 We first show that $R/\Ann_R(I)$ is maximal Cohen-Macaulay when $R$ is Gorenstein.
 
 \begin{thm}\label{33}
 	Let $R$ be a  Gorenstein local ring and let $I$ be a  strongly Cohen-Macaulay  ideal  of $R$. Then $R/\emph\Ann_R(I)$ is a maximal Cohen-Macaulay $R$-module.
 \end{thm}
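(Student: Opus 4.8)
The plan is to prove the sharper statement that $R/\Ann_R(I)$ is isomorphic to a dual module which is manifestly maximal Cohen-Macaulay, by a direct duality argument. If $\Ann_R(I)=0$ then $R/\Ann_R(I)=R$ is free, hence maximal Cohen-Macaulay, and there is nothing to prove; so throughout I assume $A:=\Ann_R(I)\neq 0$.

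The first step is to promote the Cohen-Macaulay-ness of $R/I$ to \emph{maximal} Cohen-Macaulay-ness. Since $A\neq 0$, every element of $I$ annihilates a fixed nonzero element of $R$, hence is a zerodivisor, so $\gr(I,R)=0$. Because $R$ is Cohen-Macaulay, its associated primes coincide with its minimal primes, and every minimal prime $\fp$ satisfies $\dim R/\fp=\dim R$; as $\gr(I,R)=0$ forces $I\subseteq\fp$ for some such $\fp$, we obtain $\dim R/I=\dim R=:d$. A strongly Cohen-Macaulay ideal is in particular Cohen-Macaulay, so $R/I$ is Cohen-Macaulay of dimension $d=\dim R$; that is, $R/I$ is a maximal Cohen-Macaulay $R$-module. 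The depth lemma applied to $0\to I\to R\to R/I\to 0$ then gives $\depth_R I\geq\min\{\depth R,\ \depth_R(R/I)+1\}=d$, so $I$ is itself maximal Cohen-Macaulay.

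The key step is a duality identification. Applying $\Hom_R(-,R)$ to $0\to I\to R\to R/I\to 0$ yields the exact sequence
\[
0\longrightarrow \Hom_R(R/I,R)\longrightarrow \Hom_R(R,R)\longrightarrow \Hom_R(I,R)\longrightarrow \Ext^{1}_{R}(R/I,R)\longrightarrow\cdots .
\]
Here $\Hom_R(R/I,R)=(0:_R I)=A$ and the middle term is $R$, while $\Ext^{1}_{R}(R/I,R)=0$ because $R$ is Gorenstein and $R/I$ is maximal Cohen-Macaulay. Hence the sequence collapses to the short exact sequence $0\to A\to R\to \Hom_R(I,R)\to 0$, whose left map is the inclusion $A=(0:_R I)\hookrightarrow R$ and whose right map is multiplication. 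This produces the isomorphism $R/\Ann_R(I)\cong \Hom_R(I,R)$. Since over a Gorenstein local ring the duality functor $\Hom_R(-,R)$ sends maximal Cohen-Macaulay modules to maximal Cohen-Macaulay modules, and $I$ was shown to be maximal Cohen-Macaulay, we conclude that $\Hom_R(I,R)$, and therefore $R/\Ann_R(I)$, is maximal Cohen-Macaulay.

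I expect the crux to be the middle step: extracting from $\Ann_R(I)\neq 0$ the dimension equality $\dim R/I=\dim R$, for it is precisely this that makes $R/I$ maximal Cohen-Macaulay and hence unlocks the vanishing $\Ext^{1}_{R}(R/I,R)=0$ on which the entire duality identification rests. Finally, I would note that this argument is consistent with the $\delta$-theory of the preceding results: as $R$ is Gorenstein it admits the canonical module $\omega_R=R$, so Theorem \ref{3} already yields $\delta_R(R/\Ann_R(I))=0$, which is the expected $\delta$-invariant shadow of the sharper structural isomorphism established here.
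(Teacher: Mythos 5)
Your proof is correct, and it takes a genuinely different route from the paper's. The paper deduces Theorem \ref{33} from its $\delta$-invariant machinery: Theorem \ref{3} gives $\delta_R(R/\Ann_R(I))=0$, so the module $X$ in a minimal Cohen-Macaulay approximation $0\to Y\to X\to R/\Ann_R(I)\to 0$ is stable, and an induction on $\id_R(Y)$ --- reducing modulo an element regular on $R$ and on all Koszul homologies $H_i(I,R)$, and using uniqueness of minimal approximations --- forces $Y=0$, whence $R/\Ann_R(I)\cong X$ is maximal Cohen-Macaulay. You instead argue by direct duality: from $\Ann_R(I)\neq 0$ you get $\gr(I,R)=0$, hence (Cohen-Macaulay local rings being unmixed) $\dim R/I=\dim R$, so $R/I$ and then, by the depth lemma, $I$ are maximal Cohen-Macaulay; dualizing $0\to I\to R\to R/I\to 0$ into $R\cong\omega_R$ and using $\Ext_R^{1}(R/I,R)=0$ yields $R/\Ann_R(I)\cong\Hom_R(I,R)$, which is maximal Cohen-Macaulay. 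Every step is standard and checks out (the identification $\Hom_R(R/I,R)=(0:_R I)$, the vanishing of $\Ext_R^{1}$ on maximal Cohen-Macaulay modules over a Gorenstein ring, and preservation of maximal Cohen-Macaulayness under $\Hom_R(-,\omega_R)$; see \cite[Theorem 3.3.10]{BH}). Your argument buys three things: it is self-contained and elementary, bypassing Theorem \ref{3} and Auslander--Buchweitz approximation theory entirely; it proves the sharper structural statement $R/\Ann_R(I)\cong\Hom_R(I,R)$ rather than bare maximal Cohen-Macaulayness; and, notably, it uses only the Cohen-Macaulayness of $H_0(I,R)=R/I$, so it establishes the theorem under the weaker hypothesis that $I$ is merely a Cohen-Macaulay ideal, whereas the paper's induction genuinely leans on all the Koszul homologies being Cohen-Macaulay (to find the regular element). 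What the paper's route buys is thematic coherence --- the article's stated purpose is to derive these results via the $\delta$-invariant --- and a reduction-modulo-a-regular-element template that is reused verbatim in Lemma \ref{333} and Theorem \ref{70} to pass from the Gorenstein to the general Cohen-Macaulay case; your duality argument does not extend directly beyond the Gorenstein setting, since over a Cohen-Macaulay ring with canonical module the same dualization identifies $\omega_R/(0:_{\omega_R}I)$ with $\Hom_R(I,\omega_R)$ rather than $R/\Ann_R(I)$.
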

 \begin{proof}
 	We may assume that $ \Ann_R(I)\neq0 $. By Theorem \ref{3},   $ \delta_R(R/\Ann_R(I))=0 $. Therefore there exists a minimal Cohen-Macaulay approximation 
 	\begin{equation}
 	\label{e10}
 	0\longrightarrow Y\longrightarrow X \longrightarrow R/\Ann_R(I)\longrightarrow0
 	\end{equation}
 	of $ R/\Ann_R(I) $ such that $ X $ is a stable $ R $-module. We show, by induction on $t:=\id_R(Y) $, that if (\ref{e10}) is a minimal Cohen-Macaulay approximation such that $X$ is stable then $ Y=0 $.   If $ t=0 $ we have $\depth R=\id_R(Y)=0$. As $R$ is Gorenstein local ring, $\pd_R(Y)<\infty$. So we get 
 	$\pd_R(Y)
 	= \depth R - \depth_R(Y)=0$.
 	Therefore $Y$ is free $R$-module. As the sequence \ref{e10} splits we get a contradiction unless $ Y=0 $. 
 	
 	Now suppose that $ t>0 $ and the claim has been proved for smaller than $ t $. $\Ann_R(I)\neq0$ implies that $\gr(I, R)=0$ so we may choose  $x\in R $ which is a non-zero divisor on $R$ and on all $ H_i(I,R) $, $ i\geq0 $. Set $ \overline {R}=R/xR $, $ \overline{I}=I+xR/xR  $.  From the exact sequence
 	$ 0\longrightarrow R\overset{x}{\longrightarrow} R\longrightarrow \overline{R}\longrightarrow0 $
 	one has the exact sequence 
 	\begin{equation} \label{e30}
 	0\longrightarrow  H_i(I,R) \overset{x}{\longrightarrow } H_i(I,R) \longrightarrow  H_i(\overline{I},\overline{R})\longrightarrow 0,
 	\end{equation}
 	which shows that $ \overline I $ is also a strongly Cohen-Macaulay ideal of the Gorenstein ring $ \overline R $. As $ \Ann_R(I)\neq0$, (\ref{e30}) implies that $ \Ann_{\overline R}(\overline I)\neq0 $. By Theorem \ref{3}, $ \delta_{\overline R}(\overline R/ \Ann_{\overline R}(\overline I))=0 $. 
 	As $x$ is $R$-regular, it is $R/\Ann_R(I)$-regular and so from the exact sequence \ref{e10} we have the exact sequence
 	\begin{equation}
 	\label{e11}
 	0\longrightarrow Y/xY\longrightarrow X/xX \longrightarrow \overline R/\Ann_{\overline R}(\overline I)\longrightarrow 0.  
 	\end{equation}
 	which is a minimal Cohen-Macaulay approximation for $\overline{R}/\Ann_{\overline R}(\overline I)$ (see \cite[Lemma 5.1]{ADS}).
 	Note that a minimal Cohen-Macaulay approximation is unique up to isomorphism and $ \delta_{\overline R}(\overline R/ \Ann_{\overline R}(\overline I))=0 $, therefore $ X/xX $ is stable $ \overline R $ module. Notice that $\id(Y/xY)=t-1$  and by induction hypothesis we have $ Y/xY=0 $ and so the result follows.
 \end{proof}
 In order to present our main result, we need the following preparatory lemma.   
 \begin{lem}\label{333}
 	Assume that  $S \longrightarrow R$ is a surjective homomorphism of Cohen-Macaulay local rings with the same dimensions and that $I$ is a strongly Cohen-Macaulay ideal of $R$ such that $ \emph\Ann_R(I)\neq(0) $.  Set $I^c$ for the contraction of $I$ in $S$. Then there exists an element $z\in S$ which is either zero or is a non-zero divisor on $S$ such that  $I^c+zS/zS$ is a strongly Cohen-Macaulay ideal of the ring $\overline {S}=S/zS$.
 \end{lem}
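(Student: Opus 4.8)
The plan is to follow the kernel of the surjection and transport the strong Cohen-Macaulay property along a generic hyperplane section. Write $\mathfrak{a}=\ker(S\to R)$, so that $R\cong S/\mathfrak{a}$, the contraction $I^c$ is the full preimage of $I$, and hence $\mathfrak{a}\subseteq I^c$ with $I^c/\mathfrak{a}=I$. Since $\dim S=\dim R=\dim S/\mathfrak{a}=:d$, the ideal $\mathfrak{a}$ has height $0$, and applying the depth lemma to $0\to\mathfrak{a}\to S\to R\to 0$ (with $S$ and $R$ both maximal Cohen-Macaulay) gives $\depth_S\mathfrak{a}=d$, so that $\mathfrak{a}$ is a maximal Cohen-Macaulay $S$-module (or $\mathfrak{a}=0$, in which case $S=R$ and $z=0$ works at once). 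The hypothesis $\Ann_R(I)\neq 0$ forces $\gr_R(I,R)=0$, whence $\dim R/I=d$ and, exactly as in the proof of Theorem \ref{3}, each $H_i(I,R)$ is maximal Cohen-Macaulay of dimension $d$; consequently $\dim S/I^c=d$ as well.

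First I would dispose of the case $d=0$: here $S$ is Artinian, every finitely generated $S$-module is Cohen-Macaulay, and so all the Koszul homologies $H_i(I^c,S)$ are automatically Cohen-Macaulay. Thus $I^c$ is already strongly Cohen-Macaulay over $S$, and the choice $z=0$ (with $\overline S=S$) settles the lemma. Now suppose $d>0$. The feature I would exploit is that every module in sight, namely $S$, $\mathfrak{a}$, $R$, $R/I$ and each $H_i(I,R)$, is maximal Cohen-Macaulay over $S$, hence unmixed, so all of their associated primes lie in $\Min(S)$. Therefore any $z\in\mathfrak{m}$ avoiding the finitely many primes in $\Min(S)$ is a nonzerodivisor on $S$ and, simultaneously, on $\mathfrak{a}$, on $R/I$, and (through its image $\overline z$) on every $H_i(I,R)$; such a $z$ exists because $\depth S=d>0$.

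With this $z$, setting $\overline S=S/zS$ and $\overline R=R/\overline z R$, one checks that $\overline S\to\overline R$ is again a surjection of Cohen-Macaulay local rings of the same dimension $d-1$, that its kernel $(\mathfrak{a}+zS)/zS\cong\mathfrak{a}/z\mathfrak{a}$ is maximal Cohen-Macaulay, that the image $\overline I$ of $I$ is strongly Cohen-Macaulay over $\overline R$ (because $H_i(\overline I,\overline R)\cong H_i(I,R)/\overline z\,H_i(I,R)$ is Cohen-Macaulay), and that $(I^c+zS)/zS$ is precisely the contraction of $\overline I$ under $\overline S\to\overline R$. In short, a single generic cut returns the same configuration one dimension lower, which sets up an induction on $d$.

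The heart of the argument, and the step I expect to be the main obstacle, is to convert this dimension reduction into the conclusion that $(I^c+zS)/zS$ is strongly Cohen-Macaulay over $\overline S$. Since Koszul homology depends only on the images of the generators, $H_i\big((I^c+zS)/zS,\overline S\big)=H_i(I^c,\overline S)$, and the exact sequence $0\to S\xrightarrow{z}S\to\overline S\to 0$ together with Proposition \ref{999} produces a long exact sequence in which multiplication by $z$ acts on the modules $H_i(I^c,S)$. Provided $z$ is a nonzerodivisor on each $H_i(I^c,S)$, this collapses to $H_i(I^c,\overline S)\cong H_i(I^c,S)/z\,H_i(I^c,S)$, and then the standard lifting principle (if $M/zM$ is Cohen-Macaulay and $z$ is a nonzerodivisor on $M$, then $M$ is Cohen-Macaulay) transports strong Cohen-Macaulayness between $\overline S$ and $S$; running this down to the base case $d=0$ closes the induction and in fact shows that $z=0$ is always admissible once the proviso is met. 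The delicate point is exactly that proviso: I must guarantee that the single element $z$ is a nonzerodivisor on all the $H_i(I^c,S)$, i.e. that these homologies carry no embedded or $\mathfrak{m}$-primary associated primes. To secure this I would feed the maximal Cohen-Macaulayness of $\mathfrak{a}$ and $R$ into the long exact sequence attached to $0\to\mathfrak{a}\to S\to R\to 0$ with respect to $I^c$, using the exterior-algebra splitting $H_i(I^c,R)\cong\bigoplus_{q}H_{i-q}(I,R)^{\binom{m}{q}}$ coming from the fact that the generators of $\mathfrak{a}$ act as zero on $R$, in order to confine $\Ass_S H_i(I^c,S)$ inside $\Min(S)$. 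Controlling the connecting maps in that sequence is the crux of the whole lemma; once it is done, the generic $z$ produced above is the required element.
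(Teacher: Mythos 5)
Your framing of the reduction is sound and, up to that point, parallels the paper: you cut by a generic element avoiding $\Min(S)$ (which is legitimately a nonzerodivisor on $S$, $\fa$, $R$, $R/I$ and all $H_i(I,R)$, since these are all Cohen--Macaulay of maximal dimension), you note that contraction commutes with the cut, and you set up an induction on $d=\dim S$. But there is a genuine gap at exactly the point you flag as the crux: you never prove that $z$ is a nonzerodivisor on the Koszul homologies $H_i(I^c,S)$, i.e.\ that $\Ass_S H_i(I^c,S)\subseteq \Min(S)$. Genericity only buys avoidance of $\Min(S)$; it says nothing about embedded or $\fm$-primary associated primes of $H_i(I^c,S)$, which is what must be excluded. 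The long exact sequence attached to $0\to\fa\to S\to R\to 0$ cannot deliver this: while $H_i(I^c,R)\cong\bigoplus_q H_{i-q}(I,R)^{\binom{m}{q}}$ is indeed maximal Cohen--Macaulay, the flanking terms $H_i(I^c,\fa)$ are Koszul homologies of $\fa$ with respect to an ideal of grade zero on it, for which no depth bound is available; the submodule of $H_i(I^c,S)$ arising as the image of $H_i(I^c,\fa)$ is an uncontrolled quotient and may have $\fm$ among its associated primes. Moreover the same proviso would be needed again at every stage of your induction (the next cut must be regular on all $H_i((I^c+zS)/zS,\overline S)$), and, as you yourself observe, if the proviso held at all stages your argument would show that $z=0$ always works, i.e.\ that $I^c$ itself is strongly Cohen--Macaulay over $S$ --- a statement strictly stronger than the lemma, and the very reason the lemma allows a nonzero $z$. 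So your plan reduces the lemma to an unproven depth assertion that is essentially equivalent to a strengthening of the lemma itself, and you explicitly leave the connecting maps, hence that assertion, unhandled.

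The paper sidesteps this trap by never comparing $H_i(I^c,S)$ with $H_i(I^c,\overline S)$ at all. It inducts on $t=\depth_R(R/I)$, cuts by an element $x$ regular on $S$, $R$ and the $H_i(I,R)$ only (your generic $z$ does this part correctly), and postpones any evaluation of Koszul homology over $S$ to the base case $t=0$, where $\dim_S H_i(I^c,S)\le\dim_S(S/I^c)=\dim_R(R/I)=0$ forces every $H_i(I^c,S)$ to be zero-dimensional, hence Cohen--Macaulay, so $z=0$ suffices there. The single element of the conclusion is then assembled on the way back up: if the inductive call returns the zero element take $z=x$; if it returns a regular element $\overline y$, lift it to $y\in S$ and take $z=x+y$, using that the sum of the entries of a regular sequence in a Noetherian local ring is a nonzerodivisor. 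In short: keep your generic cut, but drop the attempt to lift strong Cohen--Macaulayness along $z$ via the isomorphism $H_i(I^c,\overline S)\cong H_i(I^c,S)/zH_i(I^c,S)$; instead, make ``allowed to cut by one more element'' part of the inductive statement, as the lemma's own formulation invites.
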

 \begin{proof}
 	We proceed by induction on $t:=\depth_R( R/I)$. For  $t=0$ , one has 
 	\[\begin{array}{rl}
 	\depth_S(H_i(I^c,S))
 	&\leq \dim_S(H_i(I^c,S))\\
 	&=\dim_S(S/I^c)\\
 	& =\dim_R(R/I)=0, \\
 	\end{array}\] 
 	which means  $I^c$ is a strongly Cohen-Macaulay ideal of $S$.
 	
 	Assume that $ t>0 $ and the statement has been  proved for $ t-1 $. As $I$ is a strongly Cohen-Macaulay ideal of $R $,  $\dim S= t>0$ and there exists $x\in S $ such that $x $ is non-zero divisor on $S$,  $R$ and  on all $ H_i(I,R) $, $ i\geq0 $. Set $ \overline{R}=R/xR $, $ \overline{I}=I+xR/xR  $ and $\overline {S}=S/xS $ .  The exact sequence
 	$ 0\longrightarrow R\overset{x}{\longrightarrow} R\longrightarrow \overline{R}\longrightarrow0 $
 	implies the exact sequence 
 	\begin{center}
 		$0\longrightarrow  H_i(I,R) \overset{x}{\longrightarrow } H_i(I,R) \longrightarrow  H_i(\overline{I},\overline{R})\longrightarrow 0 $.
 	\end{center}
 	Therefore $H_i(\overline{I},\overline{R})$ is Cohen-Macaulay $R$-module and so $ \overline I $ is a strongly Cohen-Macaulay ideal of the Cohen-Macaulay ring $ \overline R $. Note that $\depth_{\overline {R}}(\overline {R}/\overline {I})=t-1 $ and there is the induced ring epimorphism  $\overline{S}\longrightarrow \overline {R} $ of Cohen-Macaulay  local rings with $\dim \overline {R}= \dim \overline {S}$ and that $\Ann_{\overline R}(\overline{I})\neq 0$. By induction hypothesis either $\overline{I}^c$  or $\overline{I}^c+\overline{y}\overline{S}/\overline{y}\overline{S}$ is a strongly Cohen-Macaulay ideal of $\overline{S}$ or $\overline{S}/\overline{y}\overline{S}$, respectively, for some $\overline{S}$-regular element $\overline{y}$. In case $\overline{I}^c$, i.e. $I^c+xR/xR$, is strongly Cohen-Macaulay ideal of $\overline{S}$, we put $z=x$. Assume that ${\overline {I}}^c+\overline {y}\overline{S}/\overline {y}\overline {S}$, i.e. $I^c+(x+y)S/(x+y)S$, is a strongly Cohen-Macaulay ideal of $S/(x+y)S$. Note $x, y$ is a regular sequence on the local ring $S$, it follows that $x+y$ is a regular element on $S$. By choosing $z=x+y$, the result follows. 
 \end{proof}
 
 Now we present our main result which give an another proof for the result of Huneke, i.e. with the same hypothesis, $R/\Ann_R(I)$ is a maximal Cohen-Macaulay $R$-module by means of the $\delta$-invariant. 
 \begin{thm}\label{70}
 	Let $R$ be a  Cohen-Macaulay local ring and let $I$ be a  strongly Cohen-Macaulay  ideal  of $R$. Then $R/\emph\Ann_R(I)$ is a maximal Cohen-Macaulay $R$-module. 	
 \end{thm}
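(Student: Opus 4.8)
The plan is to prove the theorem by induction on $d:=\dim(R/I)$, running the same reduction as in Theorem \ref{3} but recording depth in place of the delta-invariant. If $\Ann_R(I)=0$ then $R/\Ann_R(I)=R$ is already maximal Cohen-Macaulay, so I may assume $\Ann_R(I)\neq0$; then $\gr(I,R)=0$, hence $\h(I)=0$ and $d=\dim(R/I)=\dim R$. Thus it suffices to prove $\depth_R(R/\Ann_R(I))=d$. The base case $d=0$ is immediate, since then $R/\Ann_R(I)$ has dimension $0$ and hence depth $0=\dim R$.

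For the inductive step assume $d>0$. Because $I$ is strongly Cohen-Macaulay one has $\depth_R H_i(I,R)=\dim(R/I)=d>0$ for every $i$, and also $\depth R=d>0$, so the maximal ideal is not associated to any of $R$ or the $H_i(I,R)$. By prime avoidance I pick $x\in\fm$ that is a nonzerodivisor on $R$ and on all $H_i(I,R)$. With $\overline R=R/xR$ and $\overline I=(I+xR)/xR$, the sequence $0\to R\xrightarrow{x}R\to\overline R\to0$ and Proposition \ref{999} give $H_i(\overline I,\overline R)\cong H_i(I,R)/xH_i(I,R)$, which are Cohen-Macaulay; hence $\overline I$ is a strongly Cohen-Macaulay ideal of the Cohen-Macaulay ring $\overline R$ with $\dim(\overline R/\overline I)=d-1$.

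The crux is to identify $R/\Ann_R(I)$ modulo $x$ with the analogous object over $\overline R$. Exactly as in Theorem \ref{3}, the fact that $x$ is a nonzerodivisor on $R$ gives $xR\cap(0:_R I)=x(0:_R I)$, and comparing this with the top Koszul homology identification $(x:_R I)/xR\cong(0:_{\overline R}\overline I)\cong(0:_R I)/x(0:_R I)$ forces the ideal equality $(x:_R I)=xR+(0:_R I)$. The same computation shows $x$ is a nonzerodivisor on $R/\Ann_R(I)$ and produces the isomorphism
\[
(R/\Ann_R(I))\big/x(R/\Ann_R(I))\;\cong\;R/(x:_R I)\;\cong\;\overline R/\Ann_{\overline R}(\overline I).
\]
Since $\Ann_R(I)\neq0$, Nakayama applied to $(0:_R I)/x(0:_R I)$ shows $\Ann_{\overline R}(\overline I)\neq0$, so the inductive hypothesis applies and $\overline R/\Ann_{\overline R}(\overline I)$ is maximal Cohen-Macaulay over $\overline R$, of depth $d-1$. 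As $x$ is regular on $R/\Ann_R(I)$, the isomorphism gives $\depth_R(R/\Ann_R(I))=1+(d-1)=d$, finishing the induction.

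I expect the main obstacle to be the ideal equality $(x:_R I)=xR+(0:_R I)$ and the regularity of $x$ on $R/\Ann_R(I)$, since this is exactly where the strongly Cohen-Macaulay hypothesis is used — through the depth formula $\depth_R H_i(I,R)=\dim(R/I)$, which makes a suitable $x$ available, and through the behaviour of the top Koszul homology $(0:_R I)$ under reduction by $x$ via Proposition \ref{999}. An alternative route, using the lemmas just proved, would be to reduce to the complete case (so that $R$ has a canonical module and is a quotient $S\twoheadrightarrow R$ of a Gorenstein local ring $S$ of the same dimension), apply Lemma \ref{333} to make the contracted ideal strongly Cohen-Macaulay over a Gorenstein quotient, and invoke Theorem \ref{33}; there the delicate step is transferring maximal Cohen-Macaulay-ness of the annihilator quotient back along $S\twoheadrightarrow R$, which is what makes the direct depth induction preferable.
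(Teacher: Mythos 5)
Your proof is correct, but it follows a genuinely different route from the paper's. You run a direct depth induction on $\dim(R/I)$ --- essentially Huneke's original argument for \cite[Proposition 1.1]{H} --- whereas the stated point of the paper is to avoid exactly that argument and reprove the statement via the $\delta$-invariant. Concretely, the paper reduces to the complete case, presents $R$ as a quotient of a Gorenstein local ring $S$ of the same dimension, uses Lemma \ref{333} to replace the contracted ideal $I^c$ by a strongly Cohen-Macaulay ideal of a Gorenstein quotient $S/zS$, invokes the Gorenstein case (Theorem \ref{33}) --- which itself rests on the vanishing $\delta_R(R/\Ann_R(I))=0$ of Theorem \ref{3}, uniqueness of minimal Cohen-Macaulay approximations, and an induction on the injective dimension of the approximating kernel $Y$ --- and finally descends along $S\twoheadrightarrow R$. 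You reuse only the technical core of Theorem \ref{3}: the choice of $x$ regular on $R$ and on all $H_i(I,R)$ (available since $\depth_R H_i(I,R)=\dim(R/I)>0$), the isomorphisms $H_i(\overline{I},\overline{R})\cong H_i(I,R)/xH_i(I,R)$, and the equality $(x:_R I)=xR+(0:_R I)$; for the last point you should, to be airtight, appeal to the naturality of the surjection $H_n(I,R)\to H_n(\overline{I},\overline{R})$ induced by reduction mod $x$ (whose image is $(xR+(0:_R I))/xR$), rather than to the mere existence of abstract isomorphisms --- though the paper's own wording has the same looseness. Your remaining steps are sound: $x$ regular on $R$ is automatically regular on $R/\Ann_R(I)$, Nakayama keeps $\Ann_{\overline{R}}(\overline{I})\neq 0$ (and the equality $(x:_R I)=xR+(0:_R I)$ rules out the degenerate case $I\subseteq xR$), and the depth count $\depth_R(R/\Ann_R(I))=1+(d-1)=d$ closes the induction. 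What your approach buys: it is shorter and more elementary, requiring no canonical module, no completion, no Cohen presentation, and no approximation theory; it also sidesteps the delicate descent step in the paper's proof of Theorem \ref{70}, where passing from maximal Cohen-Macaulay-ness of $\overline{S}/\Ann_{\overline{S}}(I^c+zS/zS)$ to that of $(S/\Ann_S(I^c))/z(S/\Ann_S(I^c))$ tacitly uses the very equality $(z:_S I^c)=zS+\Ann_S(I^c)$. What the paper's route buys is methodological rather than logical: it exhibits the $\delta$-invariant machinery at work, which is the article's raison d'\^etre, while your argument --- valid as it is --- reconstitutes the classical proof the authors explicitly set out not to use.
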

 \begin{proof}
 	We may assume that $ \Ann_R(I)\neq(0)$.	As
 	$\widehat{H_i(I,R)}
 	\cong H_i(\widehat{I},\widehat{R})$, 
 	$\widehat{I}$ is a strongly Cohen-Macaulay $\widehat{R}$-module. Also maximal Cohen-Macaulay-ness of  $\widehat{R}/\Ann_{\widehat{R}}(\widehat{I})$ as $\widehat{R} $-module is equivalent to that  $R/\Ann_R(I)$ is a maximal Cohen-Macaulay $ R$-module. Therefore  we may assume that $R$ is a complete local ring. 
 	
 	Assume that $S\longrightarrow R$ is a ring epimorphism such that $S$ is a  Gorenstein local ring with $\dim R=\dim S$. Set  $I^c$ as the contraction of $I$. By Lemma \ref{333},  there exists an element $x\in S$ which is either zero or is a non-zero divisor on $S$ such that  $I^c+xS/xS$ is a strongly Cohen-Macaulay ideal of the Gorenstein ring $\overline {S}=S/xS$. By Theorem \ref{33}, $\overline {S}/ \Ann_{\overline {S}}(I^c+xS/xS)$ is a maximal Cohen-Macaulay $\overline {S}$-module. Note that $ \Ann_{S}(I^c)+ xS/xS \subseteq\Ann_{\overline {S}}(I^c+xS/xS)$ implies that 
 	\[\begin{array}{rl}
 	\dim \overline {S}
 	&=\dim_{\overline {S}}(\overline {S}/\Ann_{\overline {S}}(I^c+xS/xS))\\
 	& \leq \dim_{\overline {S}}\overline {S}/(\Ann_{S}(I^c)+ xS/xS) \\
 	& \leq \dim \overline {S}. \\
 	\end{array}\] 
 	
 	We also have
 	\[\begin{array}{rl}
 	\dfrac{S/\Ann_S(I^c)}{x(S/\Ann_S(I^c))}
 	&\cong \dfrac{S/\Ann_S(I^c)}{xS+\Ann_S(I^c)/\Ann_S(I^c)}\\
 	& \cong S/(xS+\Ann_S(I^c)) \\
 	& \cong \overline {S}/(xS+\Ann_S(I^c)/xS). \\
 	\end{array}\]
 	
 	Therefore $\dfrac{S/\Ann_S(I^c)}{x(S/\Ann_S(I^c))}$ is maximal Cohen-Macaulay $\overline{S}$-module. Hence    $S/\Ann_S(I^c)$ is a maximal Cohen-Macaulay $S$-module. As, there is a natural isomorphism $S/\Ann_S(I^c)\cong R/\Ann_R(I)$,  $R/\Ann_R(I)$  is a maximal Cohen-Macaulay $R$-module.
 \end{proof}
\begin{rmk}
	Theorem \ref{3} is a consequence of Theorem \ref{70}.
\end{rmk}
\begin{proof}
By Theorem \ref{70} we have $R/\emph\Ann_R(I)$ is a maximal Cohen-Macaulay $R$-module. As $R/\emph\Ann_R(I)$ has no free direct summand, we get $ \delta_R(R/\emph\Ann_R(I))=0 $. 
\end{proof}

	\section{Delta invariant of the Koszul homologies with respect to an strongly Cohen-Macaulay ideal}
	Assume that $ R $ is a Cohen-Macaulay local ring with a canonical module  and $I$ is a  strongly Cohen-Macaulay ideal of $R$. It is not true that
 $\delta_{R/I}(H_i(I,R))=0$ for all  $ i> 0$.
 For example, let $R=\bQ \llbracket x \rrbracket /(x^2)$ where $\bQ$ is the rational numbers. Assume that $I=(x)/(x^2)$. $I$ is a strongly Cohen Macaulay ideal of $R$ since $\dim_R((H_1(I,R)))=0=\dim_R((H_0(I,R)))$.  As $R/I$ is regular local ring and $H_1(I,R)\neq0$, we get $\delta_{R/I}(H_1(I,R))= \mu_{	R/I}(H_1(I,R))\neq0$ (see proposition \ref{a}).
 
Now we show that for any maximal regular sequence $J$ in	$I$, $\delta_{R/J}(H_i(I,R))=0$ for all  $ i> 0$.
\begin{prop} \label{6}
Assume that $ R $ is a Cohen-Macaulay local ring with a canonical module, and that $I$ is a  Strongly Cohen-Macaulay ideal of $R$, then for any maximal regular sequence $J$ in	$I$, $\delta_{R/J}(H_i(I,R))=0$ for all  $ i> 0$.
	
\end{prop}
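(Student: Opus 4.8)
The plan is to realize each positive Koszul homology $H_i(I,R)$ as a maximal Cohen-Macaulay module over the quotient ring $R/J$ that has \emph{no} nonzero free direct summand; by the free-summand description of the $\delta$-invariant recalled in the discussion preceding Proposition \ref{a}, this at once forces $\delta_{R/J}(H_i(I,R))=0$.

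First I would set up the quotient ring. Put $g=\gr(I,R)=\h I$ (these agree because $R$ is Cohen-Macaulay) and let $J=(a_1,\dots,a_g)$ be the given maximal regular sequence in $I$, computing the Koszul homology with respect to a minimal generating set of $I$ (legitimate since strong Cohen-Macaulayness does not depend on the chosen generators). Then $R/J$ is again a Cohen-Macaulay local ring with $\dim(R/J)=\dim R-g=\dim(R/I)$, and it admits the canonical module $\omega_R/J\omega_R$ since $J$ is generated by an $R$-regular sequence; hence Proposition \ref{a} and the minimal Cohen-Macaulay approximation machinery are all available over $R/J$.

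Next I would verify the maximal Cohen-Macaulay property. The ideal $I$ annihilates every $H_i(I,R)$ (a standard Koszul fact), so each $H_i(I,R)$ is naturally an $R/J$-module. Strong Cohen-Macaulayness makes $H_i(I,R)$ a Cohen-Macaulay $R$-module, and $\depth_R(H_i(I,R))=\dim(R/I)$ by \cite[Remark 1.5]{H}; since $\Supp_R(H_i(I,R))\subseteq V(I)$ we also get $\dim_R(H_i(I,R))=\dim(R/I)$. Because depth and dimension of an $R/J$-module are unchanged when measured over $R/J$, it follows that $\depth_{R/J}(H_i(I,R))=\dim_{R/J}(H_i(I,R))=\dim(R/J)$, i.e. $H_i(I,R)$ is maximal Cohen-Macaulay over $R/J$.

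The crux is ruling out free summands for $i>0$. As $H_i(I,R)$ is maximal Cohen-Macaulay over $R/J$, the trivial sequence $0\to 0\to H_i(I,R)\to H_i(I,R)\to 0$ is its (unique) minimal Cohen-Macaulay approximation, so $\delta_{R/J}(H_i(I,R))$ equals the maximal rank of a free $R/J$-direct summand of $H_i(I,R)$. Suppose such a nonzero free summand existed. Then $\Ann_{R/J}(H_i(I,R))=0$; but $I$ annihilates $H_i(I,R)$, so the image $I/J$ lies in this annihilator, forcing $I/J=0$, i.e. $I=J$. In that case $I$ is generated by the regular sequence $a_1,\dots,a_g$, which (as a grade-$g$ height-$g$ generating set) is a minimal generating set, so the associated Koszul complex is a resolution and $H_i(I,R)=0$ for all $i>0$, contradicting the existence of a nonzero summand. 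Hence for every $i>0$ the module $H_i(I,R)$ has no free $R/J$-direct summand, and $\delta_{R/J}(H_i(I,R))=0$. I expect the two points needing the most care to be the bookkeeping that upgrades "Cohen-Macaulay over $R$" to "maximal Cohen-Macaulay over $R/J$", and the complete-intersection case $I=J$, where the vanishing $H_i(I,R)=0$ genuinely relies on using a minimal generating set.
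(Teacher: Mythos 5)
Your proof is correct and takes essentially the same route as the paper's: both first show each $H_i(I,R)$ is maximal Cohen--Macaulay over $R/J$ via the depth count $\depth_R(H_i(I,R))=\dim R/I=\dim R/J$, then observe that a nonzero $\delta_{R/J}$ would produce a free $R/J$-summand, which is impossible because $I$ annihilates the Koszul homology, forcing $I=J$ and hence $H_i(I,R)=0$ for $i>0$ by the acyclicity of the Koszul complex on a regular sequence (the paper's appeal to \cite[Theorem 16.5]{HM}). Your write-up merely streamlines matters, merging the paper's separate cases $\h(I)=0$ and $\h(I)>0$ into one argument and making explicit the minimal-generating-set convention that the paper uses implicitly, and your final contradiction ($H_i=0$ cannot carry a nonzero free summand) is a cleaner rendering of the paper's somewhat garbled closing sentence.
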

\begin{proof} Set $J=(\underline{x})$, where $\underline{x}$ is a maximal regular sequence contained in $I$. If $\h(I)=0$, we have $\dim R/I=\dim R$ and so $H_i(I,R)$ are maximal Cohen-Macaulay $R$-modules. As $ I\subseteq \Ann_R(H_i(I,R)) $, we get $H_i(I,R)$ are stable $R$-module and result follows. Now assume that $\h(I)>0$.  As $\h(I)=\h((\underline{x}))$, we get 
	\[\begin{array}{rl}
	\dim R/(\underline{x}) &\geq \dim_{R/(\underline{x})}(H_i(I, R))\\
	&\geq\depth_{ R/(\underline{x})}(H_i(I, R))\\
	&= \depth_{ R}(H_i(I,R))\\
	&=\dim_{ R}(H_i(I,R))\\
	&=\dim_R(R/I) \\
	&=\dim R/(\underline{x}).
	\end{array}\] 
	Therefore all $H_i(I,R)$ are maximal Cohen-Macaulay $R/(\underline{x})$-modules. If $\delta_{R/(\underline{x})}(H_i(I, R))\not=0$ for some $i>0$, then $R/(\underline{x})$ is a direct summand of $H_i(I, R)$ which implies that $I=(\underline{x})$ and so $H_j(I, R)=0$ for all $j>0$, by \cite[Theorem 16.5]{HM}. As a result $R=(\underline{x})$ which is a contradiction. 
\end{proof}

We now assume that $I$ a (not necessarily strongly) Cohen-Macaulay ideal of  a Gorenstein ring $R$. It will be shown that the same claim as in Theorem \ref{6} holds true for the top non-zero Koszul homology of $I$. 

\begin{thm} \label{15}
	Let $R$ be a  Cohen-Macaulay  local ring  admits  canonical module $ \omega_R $. Suppose that $ I $ is a (not necessarily strongly) Cohen-Macaulay ideal of $ R $. Then there exists a Cohen-Macaulay ideal $ J\subseteq I $  such that $ \delta_{R/J}(\emph\Ext_R^{i}(R/I, \omega_R))=0 $ for all integer  $i\geq 0$.
	In particular,  $ \delta_{R/J}( H_s(I,\omega_R))=0 $, where $s=\mu_R(I)-\emph\h_R(I)$.
\end{thm}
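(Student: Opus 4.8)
The plan is to reduce the whole statement to a single computation about the canonical module of $R/I$, and then to choose $J$ cleverly as a \emph{proper} subideal of $I$ of the same height. First I would record the homological identifications. Since $R/I$ is Cohen-Macaulay of codimension $c:=\h_R(I)=\gr(I,R)$, the canonical module / local duality machinery (\cite[Theorem 3.3.10]{BH}) gives $\Ext_R^i(R/I,\omega_R)=0$ for all $i\neq c$ and $\Ext_R^c(R/I,\omega_R)\cong\omega_{R/I}$, the canonical module of $R/I$. Thus $\delta_{R/J}(\Ext_R^i(R/I,\omega_R))=0$ is automatic for $i\neq c$, and the first assertion collapses to proving $\delta_{R/J}(\omega_{R/I})=0$. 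For the ``in particular'' clause I would use that $\omega_R$ is a faithful maximal Cohen-Macaulay module, so $\gr(I,\omega_R)=\gr(I,R)=c$; combining the self-duality $H_{n-i}(I,\omega_R)\cong H^{i}(I,\omega_R)$ of the Koszul complex (with $n=\mu_R(I)$) with the identification of the first non-vanishing Koszul cohomology $H^{c}(I,\omega_R)\cong\Ext_R^c(R/I,\omega_R)$ (\cite[Section 1.6]{BH}) yields $H_s(I,\omega_R)\cong\omega_{R/I}$ for $s=n-c=\mu_R(I)-\h_R(I)$. Hence the ``in particular'' statement reduces to the same computation.

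Next I would construct $J$. Let $\underline x=x_1,\dots,x_c$ be a maximal regular sequence contained in $I$. If $(\underline x)\subsetneq I$, set $J=(\underline x)$; if instead $I=(\underline x)$ is a complete intersection, set $J=(x_1,\dots,x_{c-1},x_c^2)$ (and $J=0$ in the degenerate case $c=0$, $I\neq 0$). In every case $J$ is generated by a regular sequence of length $c$, so $R/J$ is Cohen-Macaulay of dimension $\dim R-c=\dim R/I$, and $J\subsetneq I$. I would verify the strict inclusion in the complete-intersection case by noting that $x_c\in(x_1,\dots,x_{c-1},x_c^2)$ would force $x_c(1-bx_c)\in(x_1,\dots,x_{c-1})$ for some $b$, hence $x_c\in(x_1,\dots,x_{c-1})$ since $1-bx_c$ is a unit, contradicting the regularity of $\underline x$.

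The heart of the argument is then a duality computation over $A:=R/J$. Writing $\fb:=I/J\ (\neq 0)$, we have $A/\fb=R/I$, an ideal of height $0$ in $A$, so $R/I$ is a maximal Cohen-Macaulay $A$-module and $\omega_{R/I}\cong\Hom_A(R/I,\omega_A)=(R/I)^{\dagger}$, where $(-)^{\dagger}=\Hom_A(-,\omega_A)$. Since $\omega_{R/I}$ is maximal Cohen-Macaulay over $A$, by the description of $\delta$ through minimal Cohen-Macaulay approximations it suffices to show that $\omega_{R/I}$ has no non-zero free $A$-summand. Here I would invoke that $(-)^{\dagger}$ is an additive involutive duality on maximal Cohen-Macaulay $A$-modules, under which $A\leftrightarrow\omega_A$ and $(R/I)^{\dagger\dagger}\cong R/I$; consequently a free summand $A\mid\omega_{R/I}=(R/I)^{\dagger}$ is equivalent to $\omega_A\mid R/I$. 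But $R/I$ is cyclic, so $\omega_A\mid R/I$ forces $\mu_A(\omega_A)=1$, i.e. $A$ is Gorenstein with $\omega_A\cong A$, whence $R/I\cong A$ and $\fb=0$ --- contradicting $J\subsetneq I$. Therefore $\omega_{R/I}$ has no free $A$-summand, $\delta_{R/J}(\omega_{R/I})=0$, and both assertions follow.

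The main obstacle I anticipate is precisely the choice of $J$: taking $J$ to be an arbitrary maximal regular sequence, or $J=I$ itself, fails whenever $I$ is a complete intersection and $R/I$ is Gorenstein, since then $\omega_{R/I}$ is free and its $\delta$ equals $1$. Forcing $J\subsetneq I$ of the \emph{same} height is what makes $R/I$ a proper cyclic quotient of $R/J$, and it is this cyclicity that annihilates the free summand after dualizing. The remaining care is routine: bookkeeping the complete-intersection and small-height edge cases, and confirming $\gr(I,\omega_R)=c$ so that the Koszul-to-$\Ext$ identification is legitimate.
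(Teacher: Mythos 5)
Your proof is correct, and it takes a genuinely different --- and in fact more careful --- route than the paper's. The paper argues by induction on $t=\h_R(I)$: it picks a nonzerodivisor $a\in I$, passes to $\overline R=R/aR$ via the Rees isomorphism $\Ext_R^{t}(R/I,\omega_R)\cong\Ext_{\overline R}^{t-1}(\overline R/\overline I,\omega_{\overline R})$, and at the base $t=0$ takes $J=0$, where $\Hom_R(R/I,\omega_R)$ is maximal Cohen--Macaulay and has no free summand because it is annihilated by $I$. Unwound, the paper's $J$ is simply an $R$-sequence of length $t$ inside $I$, so the skeleton agrees with yours; the difference is in the key no-free-summand step, where the paper uses the annihilator and you use the duality $(-)^{\dagger}=\Hom_A(-,\omega_A)$ over $A=R/J$ to convert a summand $A\mid\omega_{R/I}$ into $\omega_A\mid R/I$, and then exploit cyclicity and type one. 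Your version buys something real: the paper's base case silently requires the image of $I$ at the bottom of the induction to be nonzero, and this can fail exactly when $I$ is a complete intersection --- e.g.\ for $R=k[[x]]$, $I=(x)$, the induction as written can produce $J=I$, where the required conclusion $\delta_{R/I}(\Ext_R^{1}(R/I,\omega_R))=\delta_{k}(k)=0$ is false since $\delta_k(k)=1$ --- whereas your perturbation $J=(x_1,\dots,x_{c-1},x_c^2)$, with the unit-trick verification that $J\subsetneq I$, handles precisely this case (and, used to choose the last nonzerodivisor, would also repair the paper's induction). Two minor quibbles that do not affect correctness: the identification $\Ext_R^{c}(R/I,\omega_R)\cong\omega_{R/I}$ over a non-Gorenstein base is best cited via the regular-sequence reduction (BH 3.3.5 together with Rees's lemma and $\Hom_A(R/I,\omega_A)=\omega_{R/I}$) rather than BH 3.3.10, and $\gr(I,\omega_R)=c$ follows not from faithfulness alone but from $\gr(I,\omega_R)=\dim\omega_R-\dim(\omega_R/I\omega_R)=\dim R-\dim R/I$, using that $\omega_R$ is maximal Cohen--Macaulay with $\Supp\omega_R=\Spec R$. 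Note finally that, like the paper's statement, your argument implicitly assumes $I\neq 0$ (for $I=0$ and $R$ Gorenstein the assertion fails), which your $c=0$ case already makes explicit.
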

\begin{proof}
	As $\Ext_R^{i}(R/I, \omega_R)=0$ for all $i \neq \h_R(I)$ (see  \cite [Exercise 3.1.24]{BH} and \cite[Theorem 17.1]{HM}), we have nothing to proof. For the case $i=\h_R(I)$, by induction on  $ \h_R(I)=t $. Let $ t=0 $, we choose $ J=0 $. By \cite[Proposition 3.3.3]{BH}, $ \Hom_R(R/I,\omega_R) $ is maximal  Cohen-Macaulay $ R $-module. As $\Hom_R(R/I,\omega_R) $ has no free direct summand, we get $ \delta_R(\Hom_R(R/I,\omega_R))=0 $.
	
	Now suppose that $ t> 0  $ and the claim has been proved for smaller than $ t $.  Choose a non-zero divisor $a\in I$ on  $R$ and set $ \overline {R}=R/aR $, $ \overline{I}=I/aR  $. Note that $\h_{\overline R}\overline{I}=t-1$ and $\overline{I}$ is a Cohen-Macaulay ideal of $\overline{R}$. By induction hypothesis,  there exists an ideal $J$ of $R$ containing $a$  such that $\overline J= J/aR\subseteq \overline I $, $\overline R/\overline J$ is Cohen-Macaulay ring satisfying  $ \delta_{ \overline R/\overline J}(\Ext_{\overline R}^{t-1}(\overline R/\overline I, \omega_{\overline R}))=0 $ .  Thus we have  we have
	\[\begin{array}{rl}
	\delta_{R/J}(\Ext_{R}^{t}( R/ I,  \omega_R))
	&= \delta_{\overline R/\overline J}(\Ext_{\overline R}^{t-1}(\overline R/\overline I, \omega_R/a\omega_R)) \\
	&=\delta_{\overline R/\overline J}(\Ext_{\overline R}^{t-1}(\overline R/\overline I, \omega_{\overline R}))\\
	&=0.
	\end{array}\]   
	The final claim is clear by  \cite[Theorem 1.6.16]{BH}.
\end{proof}

Let $D(-):=\Hom_R(-,\E_R(R/\fm))$ be the Matlis dual functor. Suppose that $ I $ is an $ \fm $-primary ideal of  $ R $. The following result is a corollary of Theorem \ref{15}, which shows that there exists a Cohen-Macaulay ideal $J$, $ J\subseteq I $,  such that $\delta_{R/J}(\E_{R/I}(R/\fm)) =0 $. Let $ I=\fm $ be a ideal of a Cohen-Macaulay local ring  $ R $ such that $\mu_R(I)=1$. We choose a Cohen-Macaulay ideal $J=\fm^2$. Then $ R/J $ is a Gorenstein, and  not regular local ring, equivalently  $\delta_{R/J}(\E_{R/\fm}(R/\fm)) =\delta_{R/J}(R/\fm)=0$ (see Proposition \ref{a}).

\begin{cor}
	Let $(R,\fm)$ be a  Cohen-Macaulay  local ring with a canonical module. Suppose that $ I $ is an $ \fm $-primary  ideal of $ R $. Then there exists a Cohen-Macaulay ideal $J$, $ J\subseteq I $,   such that  $\delta_{R/J}(\emph\E_{R/I}(R/\fm)) =0 $.
\end{cor}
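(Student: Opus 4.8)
The plan is to read the corollary off from Theorem~\ref{15}, the only real work being to recognize $\E_{R/I}(R/\fm)$ as the top $\Ext$ module into the canonical module. First I would note that, since $I$ is $\fm$-primary, $R/I$ has finite length; in particular $R/I$ is a Cohen-Macaulay ring of dimension zero, so $I$ is a Cohen-Macaulay ideal and $\h_R(I)=\dim R=:d$. Theorem~\ref{15} then applies verbatim and yields a Cohen-Macaulay ideal $J\subseteq I$ with $\delta_{R/J}(\Ext_R^{i}(R/I,\omega_R))=0$ for every $i\geq0$. Moreover, as recorded in the proof of Theorem~\ref{15}, $\Ext_R^{i}(R/I,\omega_R)=0$ unless $i=\h_R(I)=d$, so the content of the corollary is entirely carried by the single module $\Ext_R^{d}(R/I,\omega_R)$.

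It then remains to establish the isomorphism $\Ext_R^{d}(R/I,\omega_R)\cong \E_{R/I}(R/\fm)$, which is the crux of the argument. I would argue that for any finite length $R$-module $M$ one has $\Ext_R^{d}(M,\omega_R)\cong D(M)$. Indeed, the defining Bass numbers of the canonical module give $\Ext_R^{i}(R/\fm,\omega_R)=0$ for $i\neq d$ and $\Ext_R^{d}(R/\fm,\omega_R)\cong R/\fm$; a filtration of $M$ along a composition series then shows that $M\mapsto \Ext_R^{d}(M,\omega_R)$ is an exact contravariant functor on finite length modules carrying $R/\fm$ to $R/\fm$, and such a functor is naturally isomorphic to the Matlis dual $D$ (equivalently, this is local duality $\Ext_R^{d}(M,\omega_R)\cong \H^{0}_\fm(M)^{\vee}$ together with $\H^{0}_\fm(M)=M$, see \cite{BH}). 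Taking $M=R/I$ gives $\Ext_R^{d}(R/I,\omega_R)\cong D(R/I)$. Finally, $D(R/I)=\Hom_R(R/I,\E_R(R/\fm))=(0:_{\E_R(R/\fm)}I)$ is precisely the injective hull of $R/\fm$ taken over the quotient ring $R/I$, i.e. $\E_{R/I}(R/\fm)$, a standard feature of Matlis duality over $R/I$. Putting the pieces together, $\delta_{R/J}(\E_{R/I}(R/\fm))=\delta_{R/J}(\Ext_R^{d}(R/I,\omega_R))=0$, which is the assertion.

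The step I expect to be the main obstacle is the identification $\Ext_R^{d}(R/I,\omega_R)\cong\E_{R/I}(R/\fm)$, not the invocation of Theorem~\ref{15}. Two points deserve care: the duality $\Ext_R^{d}(M,\omega_R)\cong D(M)$ is cleanest when $R$ is complete, so if I phrase it through local duality I would either pass to the completion---harmless here since $R/I$ and all the modules in sight have finite length and $\omega_{\widehat R}=\widehat{\omega_R}$, while $\delta_{R/J}$ is unaffected---or else rely on the filtration argument above, which needs no completeness; and the identity $\Hom_R(R/I,\E_R(R/\fm))\cong\E_{R/I}(R/\fm)$ must be cited as the behaviour of injective hulls under $\Hom_R(R/I,-)$. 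Once these are in place the corollary is immediate.
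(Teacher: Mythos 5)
Your proposal is correct and takes essentially the same route as the paper: apply Theorem \ref{15} to get $J$, then identify $\E_{R/I}(R/\fm)$ with $\Ext_R^{d}(R/I,\omega_R)$ via Matlis/local duality (the paper writes $\E_{R/I}(R/\fm)=D(\Gamma_{\fm}(R/I))$ and cites \cite[Theorem 3.5.8]{BH} for the second identification). Your extra care about the completeness hypothesis in local duality --- circumvented by the Bass-number/filtration argument or by passing to $\widehat{R}$ --- is a refinement of detail the paper glosses over, not a different approach.
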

\begin{proof}
	By Theorem \ref{15}, there is a Cohen-Macaulay ideal $J$ of $R$ with $J\subset I$ such that $ \delta_{R/J}(\Ext_R^{i}(R/I, \omega_R))=0 $ for all integer  $i\geq 0$.  In particular, for $i=d$, we have
	\[\begin{array}{rllll}
	\delta_{R/J}(\E_{R/I}(R/\fm))&=\delta_{R/J}(D( \Gamma_{\fm}(R/I)))&\\
	&= \delta_{R/J}(\Ext_R^{d}(R/I, \omega_R))\ \ \ \ &\text{(see \cite[Theorem 3.5.8]{BH})}\\
	& =0  &\text{(by Theorem \ref{15})}.
	\end{array}\]
	
\end{proof}

{\it Acknowledgments}. The authors would like to thank Hammid Hassanzadeh for his useful comments
which improvement the Theorem \ref{6}.

		\bibliographystyle{amsplain}
	
\end{document}